\newcommand{\onto}{\twoheadrightarrow}
\newcommand{\into}{\hookrightarrow}
\newcommand{\proves}{\vdash}
\renewcommand{\d}{{\downarrow}}
\renewcommand{\u}{{\uparrow}}
\renewcommand{\theta}{\vartheta}
\renewcommand{\phi}{\varphi}
\newcommand{\IPC}{\mathrm{IPC}}
\renewcommand{\v}{\overline{p}} 
\newcommand{\vp}{\overline{p},v} 
\newcommand{\F}[1]{F(#1)} 
\newcommand{\E}[1]{E(#1)}
\newcommand{\N}{\mathbb{N}} 
\newcommand{\T}{\mathbb{T}} 
\newcommand{\f}{f} 
\newcommand{\h}{h} 
\newcommand{\w}[1]{\widehat{#1}}
\renewcommand{\o}[1]{\overline{#1}}
\theoremstyle{plain}
\newtheorem{theorem}{Theorem}
\newtheorem{proposition}[theorem]{Proposition}
\newtheorem{lemma}[theorem]{Lemma}
\theoremstyle{definition}
\newtheorem{example}[theorem]{Example}
\newtheorem{remark}[theorem]{Remark}
\newtheorem*{notation}{Notation}
\newcommand{\Addresses}{{
  \bigskip
  \footnotesize

  Samuel\ J.\ v.\ Gool, \textsc{Mathematics Department, City University of New York City College, NY 10031, USA and
  ILLC, Universiteit van Amsterdam, Postbus 94242, 1090 GE Amsterdam, The Netherlands}\par\nopagebreak
  \textit{E-mail address}, Sam\ v.\ Gool: \texttt{samvangool@me.com}

  \medskip

  Luca Reggio, \textsc{IRIF, Universit\'e Paris Diderot, Sorbonne Paris Cit\'e, Case 7014, 75205 Paris Cedex 13, France and 
  Laboratoire J.\ A.\ Dieudonn\'e, Universit\'e de Nice -- Sophia Antipolis, Parc Valrose, 06108 Nice Cedex 02, France}\par\nopagebreak
  \textit{E-mail address}, Luca Reggio: \texttt{reggio@unice.fr}

  \medskip
}}
\newcommand{\Acknowledgements}{{
\bigskip
\footnotesize

{\bf Acknowledgements.}
The authors would like to thank the anonymous referee for their useful comments and suggestions, which improved the paper. 
The first-named author was supported by the European Union's Horizon 2020 research and innovation programme under the Marie Sklodowska-Curie grant \#655941. The second-named author acknowledges financial support from Sorbonne Paris Cit\'e (PhD agreement USPC IDEX -- REGGI15RDXMTSPC1GEHRKE), and from the European Research Council (ERC) under the European Union's Horizon 2020 research and innovation programme (grant \#670624).
}
}
\begin{document}
\title{An open mapping theorem for finitely copresented Esakia spaces}
\author{Samuel\ J.\ v.\ Gool and Luca Reggio}
\thanks{To appear in \emph{Topology and its Applications}}


\maketitle
\vspace{-1cm}
\begin{abstract}
We prove an open mapping theorem for the topological spaces dual to finitely presented Heyting algebras.
This yields in particular a short, self-contained semantic proof of the uniform interpolation theorem for intuitionistic propositional logic, first proved by Pitts in 1992.
Our proof is based on the methods of Ghilardi \& Zawadowski. However, our proof does not require sheaves nor games, only basic duality theory for Heyting algebras.
\end{abstract}




%
%
\vspace{5mm}

In this paper, we give a short and self-contained proof of an open mapping theorem for dual spaces of finitely presented Heyting algebras. Our proof relies only on Esakia duality for Heyting algebras and a combinatorial argument in the spirit of \cite{GZ1995}, but avoids the machinery of sheaves and games used there. This open mapping theorem in particular yields as a corollary an alternative proof of the uniform interpolation theorem for intuitionistic propositional logic ($\IPC$), first proved in \cite{Pit1992} using proof-theoretic methods.

Uniform interpolation is a strong property possessed by certain propositional logics. On the one hand, uniform interpolants give implicit definitions of second-order quantifiers in a propositional logic \cite{Pit1992}. On the other hand, uniform interpolation is tightly related to the existence of a model completion for the first-order theory of the class of algebras associated to a logic \cite{GZ2002}. While the connection between ordinary deductive interpolation for propositional logics and amalgamation properties of the associated variety of algebras has been extensively investigated (see e.g.\ \cite{MMT14}), the first systematic study of uniform interpolation from a semantic (universal algebraic) standpoint appears to be \cite{vGMT2017}, following \cite{GZ2002}.

We believe that, more than the result itself, the contribution of our paper relies in the proof method that we adopt. The topological approach is shown to be useful for studying uniform interpolation in the case of $\IPC$. This paves the way to similar investigations for other non-classical propositional logics.  

The paper is structured as follows. In Section~\ref{s:preliminaries} we briefly recall Esakia duality for Heyting algebras, along with the relevant facts that we will use. In Section~\ref{s:interpolation-and-open-mapping-theorem} we formulate an open mapping theorem and we show how Pitts' uniform interpolation theorem follows from it. We also show there that our open mapping theorem is slightly stronger than Pitts' theorem. Sections~\ref{s:step-by-step}--\ref{s:proof-of-combinatorial-lemma} contain the proof of the main theorem. In Section~\ref{s:step-by-step} we introduce an ultrametric on the dual space, which shows how the step-by-step construction of finitely generated free Heyting algebras \cite{Ghi1995} relates to the topological setting. In Section~\ref{s:construction-finite-kripke-models}, we use this ultrametric to reduce the open mapping theorem to a lemma concerning finite Kripke models. We prove this lemma in the final Section~\ref{s:proof-of-combinatorial-lemma}.

\begin{notation}
Throughout the paper, we will employ the following notations. The set of non-negative integers is $\N:=\{0,1,2,\ldots\}$, and $\#S$ denotes the cardinality of any set $S$. Moreover, if $(X,\leq)$ is a poset and $x\in X$, write $\u{x}:=\{x'\in X\mid x\leq x'\}$ and $\d{x}:=\{x'\in X\mid x'\leq x$\}; for $S\subseteq X$, $\u{S}:=\bigcup_{x\in S}{\u{x}}$ and $\d{S}:=\bigcup_{x\in S}{\d{x}}$.
\end{notation}

\section{Esakia duality for Heyting algebras}\label{s:preliminaries}
We assume familiarity with the theory of distributive lattices; for background see, e.g., \cite[Chapters II-III]{BD74}. 
Recall that a \emph{Heyting algebra} is a bounded distributive lattice $A$ in which the operation $\wedge$ has a residual $\to$, that is $a\wedge b\leq c$ iff $b\leq a\to c$ for all $a,b,c\in A$.
An example of Heyting algebra is provided by the lattice of opens of an arbitrary topological space. 
Notice that in a Heyting algebra of this form, the supremum of any subset exists (i.e., the Heyting algebra is \emph{complete}), which is not the case in all Heyting algebras.
We next recall the basics of Esakia duality, which gives a topological representation for all Heyting algebras. See, e.g., \cite{Geh2014Esak} for more details. 
An \emph{Esakia space} is a partially ordered compact space $(X,\leq)$ such that: \emph{(i)} $X$ is totally order-disconnected, that is, whenever $x\not\leq y$ are elements of $X$, there is a clopen (=closed and open) $U\subseteq X$ that is an up-set for $\leq$ and satisfies $x\in U$ but $y\notin U$; and \emph{(ii)} $\d{C}$ is clopen whenever $C$ is a clopen subset of $X$.
Given a Heyting algebra $A$, the set $X_A$ of prime filters of $A$ partially ordered by set-theoretic inclusion is an Esakia space when equipped with the Stone topology generated by the sets $\w{a}:=\{x\in X_A\mid a\in x\}$, for $a\in A$, and their complements. 
Moreover, if $h\colon A\to B$ is a homomorphism of Heyting algebras then $f := h^{-1}\colon X_B\to X_A$ is continuous, and a \emph{p-morphism}, i.e.\ $\u{f^{-1}(S)}=f^{-1}(\u{S})$ for every subset $S\subseteq X_A$.
This correspondence yields a duality, known as Esakia duality \cite{Esak1974}, between the category of Heyting algebras and their homomorphisms, and the category of Esakia spaces and continuous p-morphisms. In particular, a Heyting algebra $A$ can be recovered, up to isomorphism, from its dual Esakia space as the algebra of clopen up-sets of $X_A$, where the assignment $a\mapsto \w{a}$ is a Heyting algebra isomorphism. 

In dealing with properties of $\IPC$, a key r\^ole is played by finitely generated free Heyting algebras and their dual spaces. Let $\F{\v}$ be the Heyting algebra free on a finite set $\v$, that is, the algebra of $\IPC$-equivalence classes of propositional intuitionistic formulae in the variables $\v$, and $\E{\v}$ its dual Esakia space. A Heyting algebra is \emph{finitely presented} if it is the quotient of $\F{\v}$ under a finitely generated congruence; such congruences can in fact always be generated by a single pair of the form $(\phi,\top)$. We call an Esakia space \emph{finitely copresented} if its Heyting algebra of clopen up-sets is finitely presented. Equivalently, an Esakia space is finitely copresented if it is order-homeomorphic to a clopen up-set of $\E{\v}$ for some finite $\v$.
We recall two basic facts about such spaces in Proposition~\ref{p:properties-of-co-free-spaces}. The first item amounts to the completeness of $\IPC$ with respect to its canonical model, and the second item is the dualization of the universal property of free algebras.
\begin{proposition}\label{p:properties-of-co-free-spaces}
Let $\v=\{p_1,\ldots,p_l\}$ be any finite set of variables.
\begin{enumerate}
\item For any two formulae $\phi(\v)$ and $\psi(\v)$, $\phi \proves_\IPC \psi$ if, and only if, $\w{\phi} \subseteq \w{\psi}$ 
as subsets of $\E{\v}$.
\item If $Y$ is an Esakia space and $C_1,\ldots,C_l$ are clopen up-sets of $Y$, there exists a unique continuous p-morphism 
$\h_Y\colon Y\to \E{\v}$ satisfying $\h_Y^{-1}(\w{p_i})=C_i$ for all $i\in\{1,\ldots, l\}$.
\end{enumerate}
\end{proposition}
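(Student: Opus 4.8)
The plan is to derive both items directly from Esakia duality together with the defining universal property of $\F{\v}$ as the free Heyting algebra on $\v$.

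For item (1), recall first that $\F{\v}$ is the Lindenbaum--Tarski algebra of $\IPC$ over the variables $\v$, so that for formulae $\phi,\psi$ in the variables $\v$ one has $\phi\proves_\IPC\psi$ if and only if $[\phi]\leq[\psi]$ in $\F{\v}$, where $[\cdot]$ denotes the $\IPC$-provable-equivalence class. By Esakia duality, the map $a\mapsto\w a$ is a Heyting algebra isomorphism from $\F{\v}$ onto the algebra of clopen up-sets of $\E{\v}$, hence in particular an order isomorphism; thus $[\phi]\leq[\psi]$ if and only if $\w{[\phi]}\subseteq\w{[\psi]}$. Since $\w\phi$ (the clopen up-set of $\E\v$ denoted this way in the statement) is by definition $\w{[\phi]}$, chaining these two equivalences gives item (1).

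For item (2), I would use that Esakia duality is a contravariant equivalence between Heyting algebras with homomorphisms and Esakia spaces with continuous p-morphisms. Writing $A$ for the Heyting algebra of clopen up-sets of $Y$, the space $Y$ is order-homeomorphic to the dual space $X_A$, and continuous p-morphisms $Y\to\E\v$ are in natural bijection with Heyting homomorphisms $\F\v\to A$, a homomorphism $g$ corresponding to the p-morphism $g^{-1}$. Under the identification of each $p_i$ with $\w{p_i}\in\F\v$, one has $(g^{-1})^{-1}(\w{p_i})=g(\w{p_i})=g(p_i)$, so the requirement $\h_Y^{-1}(\w{p_i})=C_i$ translates exactly into $g(p_i)=C_i$. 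The universal property of the free Heyting algebra now provides a unique homomorphism $g\colon\F\v\to A$ with $g(p_i)=C_i$ for all $i$, and transporting this unique $g$ back across the duality yields the desired unique $\h_Y$.

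No genuinely hard step is expected; the proposition repackages standard facts. The one point requiring a little care in item (2) is the bookkeeping: one must verify that the duality bijection between p-morphisms $Y\to\E\v$ and homomorphisms $\F\v\to A$ really does match the side conditions ``$\h_Y^{-1}(\w{p_i})=C_i$'' and ``$g(p_i)=C_i$'' on the nose, so that uniqueness on the algebraic side --- which is the content of freeness --- transfers to uniqueness on the topological side. Faithfulness of the duality functor (indeed, that it is an equivalence) is exactly what makes this transfer legitimate.
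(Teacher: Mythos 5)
Your proposal is correct and follows essentially the same route as the paper: item (1) via the Lindenbaum--Tarski description of $\F{\v}$ and the fact that $\w{-}$ is a Heyting algebra (hence order) isomorphism, and item (2) by lifting the assignment $p_i\mapsto C_i$ to a unique homomorphism $\F{\v}\to A$ via freeness and dualizing. You merely spell out the bookkeeping of the duality correspondence in more detail than the paper does.
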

\begin{proof}
For item $1$, we have $\phi \proves_\IPC \psi$ iff $[\phi]\leq[\psi]$ in $\F{\v}$, which in turn is equivalent to $\w{\phi} \subseteq \w{\psi}$ because $\w{-}$ is an isomorphism of Heyting algebras. For item $2$, note that the choice of the clopen up-sets $C_1,\ldots,C_l$ gives a function from $\v$ to the algebra of clopen up-sets of $Y$. The dual map of the unique homomorphism lifting this function is $h_Y$.
\end{proof}

\section{Open maps and uniform interpolation}\label{s:interpolation-and-open-mapping-theorem}
The main aim of this paper is to prove the following theorem.
\begin{theorem}\label{thm:fin-gen}
Every continuous p-morphism between finitely copresented Esakia spaces is an open map.
\end{theorem}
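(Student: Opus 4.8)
The plan is to replace the sheaves-and-games machinery of \cite{GZ1995} by the step-by-step construction of free Heyting algebras \cite{Ghi1995}, repackaged as an ultrametric on the dual spaces. First I would recall that $\F{\v}=\bigcup_{n\in\N}F_n$ is an increasing union of \emph{finite} bounded sublattices, where $F_0$ is the free bounded distributive lattice on $\v$ and $F_{n+1}$ is the bounded sublattice generated by $F_n\cup\{a\to b\mid a,b\in F_n\}$; dually, $\E{\v}=\varprojlim_n X_n$ where each $X_n$ is the finite poset dual to $F_n$, with limit projections $\pi_n\colon\E{\v}\onto X_n$. Realizing a finitely copresented space as a clopen up-set $X\subseteq\E{\v}$ and setting $X_n:=\pi_n(X)$, one checks that $d(x,x'):=\inf\{2^{-n}\mid\pi_n(x)=\pi_n(x')\}$ (with $\inf\emptyset=1$) is an ultrametric inducing the topology of $X$, so that the clopen ``$n$-balls'' $B_n(x):=\{x'\in X\mid\pi_n(x')=\pi_n(x)\}$ form a neighbourhood basis at $x$, while the partial order of $X$ is the intersection of the pullbacks of the orders on the $X_n$.

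Next, given a continuous p-morphism $f\colon X\to Y$ between finitely copresented spaces, I would realize $X$ and $Y$ as clopen up-sets of a common $\E{\v}$ (take $\v$ large enough to present both algebras of clopen up-sets) and observe that $f$ induces order-preserving maps $f_n\colon X_n\to Y_n$ of finite posets, compatible with the projections up to a harmless bounded shift of indices (reflecting the implication-depth of the formulas that define $f$). Then I would reduce the theorem to a lifting criterion: since the $n$-balls form a neighbourhood basis, $f$ is open if and only if for every $x\in X$ and every $n\in\N$ there is $m\in\N$ with $B_m(f(x))\subseteq f(B_n(x))$ --- equivalently, every $y'\in Y$ with $\pi_m(y')=\pi_m(f(x))$ is $f(x')$ for some $x'$ with $\pi_n(x')=\pi_n(x)$. (Only one implication needs an argument: given an open $O$ and $y=f(x)\in f(O)$, pick $n$ with $B_n(x)\subseteq O$, then $m$ as above; now $B_m(y)\subseteq f(B_n(x))\subseteq f(O)$ shows $f(O)$ is a neighbourhood of each of its points.) To produce such an $x'$ it is enough, since $X=\varprojlim X_k$, to build a compatible tower $(x'_k)_k$ of points $x'_k\in X_k$ with $x'_k=\pi_k(x)$ for $k\le n$, with $x'_{k+1}$ lying over $x'_k$, and with $f_k(x'_k)=\pi_k(y')$ throughout: the finite-length partial towers then form a nonempty inverse system of finite sets, so an infinite tower exists, and $x':=\varprojlim x'_k$ does the job.

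The existence of these towers is precisely the combinatorial lemma stated in Section~\ref{s:construction-finite-kripke-models} and proved in Section~\ref{s:proof-of-combinatorial-lemma}, a statement entirely about the finite Kripke models $X_k$, $Y_k$ and the maps $f_k$: it says that (after possibly replacing $n$ by a larger value, which is harmless since $B_{n'}(x)\subseteq B_n(x)$ for $n'\ge n$) any finite partial tower over $X$ can be extended one further level in accordance with the prescribed target in $Y_{k+1}$. Its proof is an induction on the level that relies on Ghilardi's explicit description of the fibre of $X_{k+1}\onto X_k$ over a point $\bar x\in X_k$ --- whose elements are indexed by certain antichain/colour data recording the admissible one-step refinements of $\bar x$ --- in order to match each ``new'' point appearing in the fibre over $Y$ with a ``new'' point in the fibre over $X$, while all along respecting the two directions of the p-morphism condition (monotonicity of $f$ and the ``back'' clause) and compatibility with the already-constructed part of the tower. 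I expect this to be the main obstacle: everything before it is soft point-set topology plus the by-now-routine dictionary between the inverse-limit picture of $\E{\v}$ and finite Kripke semantics, whereas here one must organize the step-by-step lifting so carefully that the p-morphism constraints persist to the limit, and verify that the fibre over $X$ is always large enough to surject as required --- which is exactly where the hypothesis that $f$ is a p-morphism, rather than merely continuous, gets used.
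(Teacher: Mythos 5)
Your reduction of the theorem to a finite combinatorial statement is essentially the paper's: the ultrametric on $\E{\v}$, the finite quotients $X_k$ of $\sim_k$-classes, and the criterion ``for every $n$ there is $R(n)$ with $B(\f(x),2^{-R(n)})\subseteq \f(B(x,2^{-n}))$'' all match Sections~\ref{s:step-by-step}--\ref{s:construction-finite-kripke-models}, and your passage from partial towers to a point of the inverse limit is a legitimate variant of the paper's use of the closedness of $\f(B(x,2^{-n}))$. One organizational difference: the paper first reduces everything to the single projection $\f\colon\E{\vp}\onto\E{\v}$ dual to the inclusion of free algebras (Proposition~\ref{prop:open-mapping}), via a commutative square whose vertical maps are clopen embeddings; this eliminates the ``bounded shift of indices'' you invoke (that projection is genuinely non-expansive for $d$) and means the combinatorial lemma only ever has to be proved for this one map.

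The genuine gap is the combinatorial lemma itself, which you correctly identify as the crux but do not prove, and the strategy you sketch for it would fail as stated. First, you formulate the lemma as ``every finite partial tower extends one further level''; that is strictly stronger than what is needed (compactness only requires each set of length-$k$ towers to be nonempty, not that the restriction maps be surjective) and strictly stronger than what the paper proves --- some partial towers may well be dead ends. Second, the level-by-level lifting you describe runs into a quantitative obstruction: each use of the back clause of Proposition~\ref{prop:n-plus-one-equiv} costs one degree of approximation (from $\f(x)\sim_{k+1}y$ one only obtains successors $x''$ with $\f(x'')\sim_k y''$), so naively iterating it exhausts the approximation between $\f(\text{lift})$ and the target after finitely many steps, long before level $m$. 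The paper's proof is not a level-by-level lifting at all: for each $m\geq R(n)$ it builds a single finite Kripke model $M\subseteq X_n\times Y_m$ (the first factor stays at level $n$) cut out by the ``witness'' relation, with $R(n)=2\#X_n-1$; the budget for degrading approximations at a point $x$ is $2r(x)-1$ where $r(x)=\#q_n(\u{x})$, and the key observation is that whenever a lifting step leaves the $\sim_n$-class of $x$ the invariant $r$ strictly decreases, so a budget linear in $\#X_n$ suffices. The p-morphism identities are then established by the two inductions \eqref{eq:ind1} and \eqref{eq:ind2} rather than maintained step by step. Without this bookkeeping device (or an equivalent one), your induction ``on the level'' does not close, and this is precisely where the real content of the proof lies.
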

We show first that Pitts' uniform interpolation theorem follows in a straight-forward manner from Theorem~\ref{thm:fin-gen} and the Craig interpolation theorem for $\IPC$ \cite{Sch62}. Throughout the paper, $\v$ will denote a finite set of variables, and $v$ a variable not in $\v$. 
\begin{theorem}[Pitts \cite{Pit1992}]
Let $\phi(\vp)$ be a propositional formula. There exist propositional formulae $\phi_R(\v)$ and $\phi_L(\v)$ such that, for any formula $\psi(\v,\o{q})$ not containing $v$,
\begin{align*}
\phi \proves_\IPC \psi &\iff \phi_R \proves_\IPC \psi,\\
\psi \proves_\IPC \phi &\iff \psi \proves_\IPC \phi_L.
\end{align*}
\end{theorem}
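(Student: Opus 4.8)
The plan is to derive Pitts' theorem from Theorem~\ref{thm:fin-gen} (the open mapping theorem) together with the Craig interpolation theorem for $\IPC$. The starting point is to reformulate the syntactic data semantically via Esakia duality. The formula $\phi(\vp)$ lives in the free Heyting algebra on $l+1$ variables, whose dual space is $\E{\vp}$; forgetting the extra variable $v$ corresponds dually to a continuous p-morphism $f\colon \E{\vp}\to\E{\v}$ induced, via Proposition~\ref{p:properties-of-co-free-spaces}(2), by the clopen up-sets $\w{p_1},\ldots,\w{p_l}$ of $\E{\vp}$ (i.e.\ $f$ is the dual of the inclusion $\F{\v}\into\F{\vp}$). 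The clopen up-set $\w{\phi}\subseteq\E{\vp}$ is a finitely copresented Esakia space, and restricting $f$ to it gives a continuous p-morphism between finitely copresented Esakia spaces.

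Next I would define the candidate uniform interpolants semantically. For $\phi_R$, the natural guess is the clopen up-set $\u{f(\w{\phi})}\subseteq\E{\v}$: one checks it is a clopen up-set (using that $f$ restricted to $\w{\phi}$ is an open map, so $f(\w{\phi})$ is open, hence $\u{f(\w{\phi})}$ is open by the Esakia space axiom on $\d{(-)}$ applied to complements, and it is closed because $\w{\phi}$ is compact and $f$ continuous, so $f(\w{\phi})$ is closed and then one argues its up-closure is closed), and so by the duality it equals $\w{\phi_R}$ for some formula $\phi_R(\v)$. Dually, for $\phi_L$ the guess is $\E{\v}\setminus\u{f(\E{\vp}\setminus\w{\phi})}$, or equivalently $\{x\in\E{\v}\mid f^{-1}(\d{x})\subseteq\w{\phi}\}$; again one verifies this is a clopen up-set and names it $\w{\phi_L}$.

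The verification of the two biconditionals then proceeds through Proposition~\ref{p:properties-of-co-free-spaces}(1), which translates provability into inclusion of clopen up-sets in $\E{\v}$, combined with the fact that $f^{-1}(\w{\psi})=\w{\psi}$ (viewing $\psi$ as a formula over $\vp$ that does not mention $v$). For the $\Leftarrow$ directions one uses only that $f$ is a p-morphism: e.g.\ $\w{\phi}\subseteq f^{-1}(\w{\phi_R})$ gives $\phi\proves_\IPC\phi_R$, and then $\phi_R\proves_\IPC\psi$ yields $\phi\proves_\IPC\psi$ by transitivity; symmetrically $\psi\proves_\IPC\phi_L\proves_\IPC\phi$. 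For the $\Rightarrow$ directions one uses openness crucially: if $\phi\proves_\IPC\psi$, i.e.\ $\w{\phi}\subseteq\w{\psi}=f^{-1}(\w{\psi})$, then since $\w{\psi}$ is an up-set and $\w{\phi_R}=\u{f(\w{\phi})}$, a short p-morphism argument gives $\w{\phi_R}\subseteq\w{\psi}$, hence $\phi_R\proves_\IPC\psi$; the $\phi_L$ case is dual.

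The main obstacle, and the only place Craig interpolation enters, is to confirm that these semantically defined clopen up-sets are genuinely definable by formulae \emph{not containing $v$} in a way compatible with arbitrary additional parameters $\o{q}$ — that is, that the $\phi_R,\phi_L$ constructed on $\E{\v}$ really act as uniform interpolants against every $\psi(\v,\o{q})$, not merely those over $\v$ alone. One reconciles this by first treating the case $\o{q}=\emptyset$ purely by the open-mapping/duality argument above, and then invoking the Craig interpolation theorem for $\IPC$ to pass from an interpolant over the shared variables of $\phi$ and $\psi$ to the uniform interpolant over $\v$; concretely, given $\phi\proves_\IPC\psi$ with $\psi$ over $\v,\o{q}$, Craig interpolation supplies an interpolant $\chi(\v)$ with $\phi\proves_\IPC\chi\proves_\IPC\psi$, and one checks $\phi_R\proves_\IPC\chi$ using the already-established $\v$-case, whence $\phi_R\proves_\IPC\psi$. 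Care is needed to ensure the openness of $f$ restricted to the relevant finitely copresented subspace is exactly what licenses the key step $\w{\phi_R}\subseteq\w{\psi}$, since without openness $\u{f(\w{\phi})}$ need not be clopen and the formula $\phi_R$ need not exist at all — this is precisely the content Theorem~\ref{thm:fin-gen} contributes.
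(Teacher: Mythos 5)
Your overall strategy is the paper's: reduce, via Craig interpolation, to the case where $\psi$ only uses the variables $\v$; use the open mapping theorem to produce the two clopen up-sets of $\E{\v}$ defining $\phi_R$ and $\phi_L$; and verify the equivalences by translating provability into inclusions through Proposition~\ref{p:properties-of-co-free-spaces}(1). There is, however, one genuine error: your candidate for $\w{\phi_L}$ is wrong as written. The set $\E{\v}\setminus\u{\f(\E{\vp}\setminus\w{\phi})}$, equivalently $\{x\in\E{\v}\mid \f^{-1}(\d{x})\subseteq\w{\phi}\}$, is a clopen \emph{down}-set (if $x\leq x'$ then $\d{x}\subseteq\d{x'}$, so membership is inherited downwards), hence it is not of the form $\w{\chi}$ for any formula $\chi$ and cannot serve as $\phi_L$. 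The correct set is $\left(\d{\f(\w{\phi}^{\,c})}\right)^c=\{x\in\E{\v}\mid \f^{-1}(\u{x})\subseteq\w{\phi}\}$: one wants the largest \emph{up}-set $U$ with $f^{-1}(U)\subseteq\w{\phi}$, i.e.\ with $U\cap \f(\w{\phi}^{\,c})=\emptyset$, and for an up-set this is equivalent to $U\cap\d{\f(\w{\phi}^{\,c})}=\emptyset$. With $\u$ replaced by $\d$ your verification goes through; this is also where the Esakia axiom (that $\d{C}$ is clopen for clopen $C$) is genuinely used, together with openness of $\f$ and compactness to see that $\f(\w{\phi}^{\,c})$ is clopen.

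Two smaller points. First, no up-closure is needed for $\phi_R$: since $\f$ is a p-morphism, the forward image of an up-set is an up-set, so $\f(\w{\phi})$ is already an up-set, open by Theorem~\ref{thm:fin-gen} and closed by compactness. Your justification that the up-closure of an open set is open ``by the Esakia axiom applied to complements'' is not a valid general principle --- the axiom concerns only $\d$ of clopens --- but it is also not needed. Second, Theorem~\ref{thm:fin-gen} should be applied to $\f\colon\E{\vp}\to\E{\v}$ itself (free algebras are finitely presented, so both spaces are finitely copresented); restricting $\f$ to the subspace $\w{\phi}$ only handles the $\phi_R$ half, whereas for $\phi_L$ you need the image of the complement $\w{\phi}^{\,c}$.
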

\begin{proof}
By the Craig interpolation theorem for $\IPC$, it suffices to prove the statement for any formula $\psi$ whose variables are contained in $\v$ (cf., e.g., \cite[Prop.~3.5]{vGMT2017}). Since $\w{\phi}\subseteq\E{\vp}$ is a clopen up-set, it follows at once from Theorem~\ref{thm:fin-gen}, and the definitions of Esakia space and p-morphism, that $\f(\w{\phi})$ and $(\d\f(\w{\phi}^{\, c}))^c$ are clopen up-sets of $\E{\v}$. Thus there exist formulae $\phi_R(\v)$ and $\phi_L(\v)$ such that $\w{\phi_R}=\f(\w{\phi})$ and $\w{\phi_L}=(\d\f(\w{\phi}^{\, c}))^c$. 
It is easy to see, using the first part of Proposition~\ref{p:properties-of-co-free-spaces}, that $\phi_R$ and $\phi_L$ satisfy the conditions in the statement.
\end{proof}
As a first step towards proving Theorem~\ref{thm:fin-gen}, we will show that the theorem follows from a special case, namely Proposition~\ref{prop:open-mapping} below.
Denote by $i$ the embedding of free Heyting algebras $\F{\v}\into\F{\vp}$ that is the identity on $\v$. Let $f\colon \E{\vp}\onto\E{\v}$ be the continuous p-morphism dual to $i$. 
\begin{proposition}\label{prop:open-mapping}
The map $\f\colon \E{\vp}\onto\E{\v}$ is open.
\end{proposition}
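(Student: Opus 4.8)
The plan is to show that the map $f \colon \E{\vp} \onto \E{\v}$, dual to the inclusion $i \colon \F{\v} \into \F{\vp}$, is open. Concretely, since the clopen up-sets $\w{\phi}$ for $\phi$ a formula in the variables $\vp$ form a basis for the topology of $\E{\vp}$ that is closed under finite intersections, and since every open set is a union of such basic clopens, it suffices to prove that $f(\w{\phi})$ is open in $\E{\v}$ for every formula $\phi(\vp)$. Because $f$ is a p-morphism, $f(\w{\phi})$ is automatically an up-set; so the task reduces to showing that $f(\w{\phi})$ is open, and in fact we will aim to show it is \emph{clopen}, i.e.\ of the form $\w{\theta}$ for some formula $\theta(\v)$.

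The key reduction I would carry out is the ``step-by-step'' one announced in the introduction. The point is that, while the full algebra $\F{\vp}$ is infinite, the step-by-step construction of the free Heyting algebra stratifies $\E{\vp}$ (and $\E{\v}$) by the ``depth'' or ``rank'' of points, and up to any finite depth $n$ the relevant quotients are \emph{finite} posets. Using the ultrametric of Section~\ref{s:step-by-step}, a given clopen $\w{\phi}$ is determined by finitely much information — it is pulled back from a clopen in a finite quotient $\E{\vp}_n$ — and correspondingly one only needs to understand the restriction of $f$ to finite approximating Kripke models $f_n \colon \E{\vp}_n \to \E{\v}_n$. So the plan is: (a) express $\w{\phi}$ as the preimage of an up-set $U$ in some finite model $\E{\vp}_n$ under the canonical projection; (b) invoke a combinatorial lemma on finite Kripke models asserting that the image of $U$ under $f_n$ is again an up-set which is ``stable'' in the sense that it is pulled back from a still-finite model of $\E{\v}$; (c) conclude that $f(\w{\phi})$ is the preimage of a clopen up-set of $\E{\v}$, hence clopen, hence in particular open.

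The main obstacle — and the technical heart of the argument — is precisely step (b): controlling the image of an up-set of a finite Kripke model under the p-morphism $f_n$, and in particular showing that membership of $f(x)$ in $f(\w{\phi})$ can be decided from bounded-depth information about $f(x)$, uniformly in $x$. This is where a genuine combinatorial argument in the spirit of \cite{GZ1995} is needed: given a point $y \in \E{\v}$ lying in $f(\w{\phi})$, one must build a preimage $x \in \w{\phi}$ agreeing with $y$ on the $\v$-colouring, and the bounded-depth/bisimulation bookkeeping must be done carefully enough that the construction depends only on a finite initial segment of $y$. I expect the remaining steps to be comparatively routine: that $\w{\phi}$-type sets form a basis is standard Esakia duality, that $f$ preserves up-sets is the p-morphism property, and that a set pulled back from a finite quotient is clopen follows from continuity of the projections. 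Accordingly, I would state the finite combinatorial lemma carefully (as is done in Section~\ref{s:construction-finite-kripke-models}), reduce Proposition~\ref{prop:open-mapping} to it via the ultrametric, and defer its proof to the dedicated final section.
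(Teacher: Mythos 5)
Your opening reduction contains a genuine error that invalidates the whole plan. You assert that the clopen up-sets $\w{\phi}$ form a basis for the topology of $\E{\vp}$, and conclude that it suffices to show each $f(\w{\phi})$ is clopen. But the Stone topology on an Esakia space is generated by the sets $\w{a}$ \emph{together with their complements}; the clopen up-sets alone only generate the coarser upper topology. A basis for the actual topology consists of finite Boolean combinations of the $\w{a}$ --- for instance the $\sim_n$-classes $[x]_n$, which are not up-sets. Consequently, proving that $f(\w{\phi})$ is clopen for every clopen up-set $\w{\phi}$ (and dually for down-sets) establishes only that the dual homomorphism has left and right adjoints, i.e.\ uniform interpolation --- and the Example in Section~\ref{s:interpolation-and-open-mapping-theorem} of the paper is precisely a continuous p-morphism with this adjoint property that is \emph{not} open ($f(\mathbb{n})=\{n,\infty\}$ fails to be open). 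So the statement you propose to prove is strictly weaker than Proposition~\ref{prop:open-mapping}, and no amount of work on step (b) of your plan can close that gap.

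The paper's proof instead takes as basic opens the clopen balls $B(x,2^{-n})=[x]_n$ of the ultrametric (Lemma~\ref{l:compatible-metric}) and shows that $B(\f(x),2^{-R(n)})\subseteq \f(B(x,2^{-n}))$, so that every point of $\f(B)$ is interior. Two ingredients you do not mention are essential there: first, $\f(B)$ is closed (continuous image of a compact set in a Hausdorff space), so it suffices to realize each $y\sim_{R(n)}\f(x)$ as a \emph{limit} of points $\f(x^m)$ with $x^m\in B(x,2^{-n})$; second, the points $x^m$ are produced as $\h_M([x]_n,[y]_m)$ from the finite Kripke models $M\subseteq X_n\times Y_m$ of Lemma~\ref{l:combinatorial-lemma}, whose three itemized properties deliver exactly $x^m\in B(x,2^{-n})$ and $[\f(x^m)]_m=[y]_m$. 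Your step (b), phrased as controlling images of up-sets under maps of finite quotients, does not match this structure, and the remainder of your proposal defers the entire content to an unstated lemma. To repair the argument you would need to restate the reduction in terms of the balls $[x]_n$ rather than the clopen up-sets $\w{\phi}$.
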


\begin{proof}[Proof that Proposition~\ref{prop:open-mapping} implies Theorem~\ref{thm:fin-gen}.]
Let $g\colon X_A\to X_B$ be any continuous p-morphism between Esakia spaces. If $X_A$ and $X_B$ are dual to finitely presented Heyting algebras $A$ and $B$, respectively, then (see, e.g.,\ \cite[Lemma 3.11]{vGMT2017}) there are finite presentations $j_A\colon \F{\v,\o{q}}\onto A$ and $j_B\colon \F{\v}\onto B$ such that $j_A\circ i=g^{-1}\circ j_B$, where $i\colon \F{\v}\into\F{\v,\o{q}}$ is the natural embedding.
Dually, we have the following commutative square
\[\begin{tikzcd}
\E{\v,\o{q}} \arrow[twoheadrightarrow]{r}& \E{\v} \\
X_A \arrow[hookrightarrow]{u} \arrow{r}{g} & X_B \arrow[hookrightarrow]{u}
\end{tikzcd}\]
where the top horizontal map is open by Proposition~\ref{prop:open-mapping}. Since the presentation $j_A$ is finite, the dual map identifies the Esakia space $X_A$ with a clopen up-set of $\E{\v,\o{q}}$, so that the left vertical map is open. Therefore $g\colon X_A\to X_B$ is also open.
\end{proof}

The connection between the existence of uniform interpolants and open maps can be explained in terms of adjoints. Indeed, it was already observed in \cite{Pit1992} that the uniform interpolation theorem is equivalent to the existence of both left and right adjoints for the embeddings $\F{\v}\into\F{\vp}$.
In turn, it is not difficult to see that if a map between Esakia spaces is open, then its dual Heyting algebra homomorphism has left and right adjoints. Theorem~\ref{thm:fin-gen} implies that these properties always hold for homomorphisms between finitely presented Heyting algebras. The following example shows that the two properties are distinct in general. In this sense, our open mapping theorem establishes a slightly stronger property than uniform interpolation.
\begin{example}
We give an example of a Heyting algebra homomorphism $h\colon A\to B$ such that $h$ is both left and right adjoint, but its dual map is not open. 
For any natural number $n \geq 1$, denote by $\mathbb{n} = \{1 < \dots < n\}$ the finite chain with $n$ elements and the discrete topology. Let $X=\mathbb{1}+\mathbb{2}+\cdots$, the disjoint order-topological sum of countably many finite discrete chains, and let $\alpha X=X\cup\{\infty\}$ its one-point compactification. Extend the partial order on $X$ to a partial order on $\alpha X$ by defining $x \leq \infty$ for all $x \in \alpha X$. Denote by $\alpha\N = \N \cup \{\infty\}$ the one-point compactification of a discrete countable space, partially ordered by $x \leq y$ iff $x = y$ or $y = \infty$. Then $\alpha X$ and $\alpha\N$ are both Esakia spaces. 
Define a function $f\colon \alpha X\to \alpha \N$ by $f(\infty) := \infty$, and, for any $x\in\mathbb{n} \subseteq \alpha X$, $f(x) := n$ if $x < n$ and $f(x) := \infty$ if $x = n$.
Note that $f$ is a continuous p-morphism. Let $h \colon A \to B$ be the dual Heyting algebra homomorphism. If $U\subseteq \alpha X$ is 
a clopen up-set, then $f(U)$ is a clopen up-set, and if $V\subseteq \alpha X$ is a clopen down-set then $\d f(V)$ is a clopen down-set. 
Therefore, $h$ admits left and right adjoints. However, the map $f$ is not open.
Indeed, for any $n \geq 2$, $\mathbb{n} \subseteq X$ is open, but its image $f(\mathbb{n}) = \{n, \infty\}$ is not. 
\end{example}
\begin{remark}
The viewpoint of adjoint maps establishes a link between uniform interpolation for $\IPC$ and the theory of \emph{monadic Heyting algebras}. Indeed, recall that a monadic Heyting algebra can be described as a pair $(H,H_0)$ of Heyting algebras such that $H_0$ is a subalgebra of $H$ and the inclusion $H_0\into H$ has left and right adjoints \cite[Theorem 5]{GBezh1}. The relation between adjointness of a Heyting algebra homomorphism, and openness of the dual map, was already investigated in this framework. See, e.g., \cite[p.\ 32]{GBezh2} where an example akin to the one above is provided.
\end{remark}

\section{Clopen up-sets step-by-step}\label{s:step-by-step}
The \emph{$\to$-degree} of a propositional formula $\phi$, denoted by $|\phi|$, is the maximum number of nested occurrences of the connective $\to$ in $\phi$; $\phi$ has $\to$-degree $0$ if the connective $\to$ does not occur in $\phi$.
Fix a finite set of variables $\v$. For a point $x$ in $\E{\v}$ and $n\in\N$, we write $\T_n(x)$ for the \emph{degree $n$ theory of $x$}, that is
\begin{align*}
\T_n(x):=\{\phi(\v)\mid |\phi|\leq n \ \text{and} \ \phi\in x\}.
\end{align*}
We define a quasi-order $\leq_n$ on $\E{\v}$ by setting 
\begin{align*}
x \leq_n y \stackrel{\mathrm{def}}{\iff} \T_n(x) \subseteq \T_n(y),
\end{align*}
and we standardly define an equivalence relation $\sim_n$ on $\E{\v}$ by:
\begin{align*}
x \sim_n y \stackrel{\mathrm{def}}{\iff} x \leq_n y \text{ and } y \leq_n x \iff \T_n(x)=\T_n(y).
\end{align*}
We remark that $\bigcap_{n\in\N}{\leq_n}={\leq}$, the natural order of $\E{\v}$. Moreover, for every $n\in \N$, there are only finitely many formulae of $\to$-degree at most $n$. In particular, $\sim_n$ has finite index.
\begin{remark}\label{rmk:clopen-up-sets-n-upsets}
Notice that: \emph{a subset $S\subseteq \E{\v}$ is of the form $\w{\phi}$ for some formula $\phi(\v)$ of $\to$-degree $\leq n$ if, and only if, it is an up-set with respect to $\leq_n$. Thus, $S$ is a clopen up-set if, and only if, it is an up-set with respect to $\leq_n$ for some $n\in\N$. Hence, in particular, $\sim_n$-equivalence classes are clopen.}
In this sense, the quasi-orders $\leq_n$ yield the clopen up-sets of the space $\E{\v}$ `step-by-step'.
\end{remark}
The next proposition accounts for the Ehrenfeucht-Fraiss\'e games employed in \cite{GZ1995}. In our setting, these combinatorial structures reflect the interplay between the natural order of $\E{\v}$ and the quasi-orders $\leq_n$.
\begin{proposition}\label{prop:n-plus-one-equiv}
Suppose $x,y\in\E{\v}$ and $n\in \N$. The following equivalences hold.
\begin{enumerate}
\item $x \leq_0 y$ if, and only if, for each variable $p_i\in\v$, $p_i\in x$ implies $p_i\in y$;
\item $x \leq_{n+1} y$ if, and only if, for each $y'\in\u{y}$ there exists $x' \in\u{x}$ such that $x'\sim_n y'$.
\end{enumerate}
\end{proposition}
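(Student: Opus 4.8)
The plan is to prove both equivalences by relating the degree-$n$ theories to the semantics of intuitionistic formulae on the Esakia space $\E{\v}$, using that $\E{\v}$ is (order-homeomorphic to) the canonical Kripke model for $\IPC$ in the variables $\v$. Throughout I would use Remark~\ref{rmk:clopen-up-sets-n-upsets}: a subset of $\E{\v}$ is $\leq_n$-upward closed precisely when it is of the form $\w{\phi}$ for some $\phi(\v)$ with $|\phi|\leq n$. The key tool is that membership of a formula in a point $x$ is governed by the usual Kripke forcing clauses, where in particular $(\alpha\to\beta)\in x$ iff for every $x'\in\u{x}$ with $\alpha\in x'$ one has $\beta\in x'$.

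For item~1, I would observe that the formulae of $\to$-degree $0$ are exactly those built from the variables $\v$, $\top$, $\bot$ using only $\wedge$ and $\vee$ (no $\to$, hence no negation). For such formulae, forcing at a point depends only on which variables the point contains, and is monotone in that data. So if $x\leq_0 y$ then in particular each $p_i\in x$ forces $p_i\in y$, giving the forward direction; conversely, if the variables in $x$ are contained in those in $y$, an easy induction on the structure of a $\to$-degree-$0$ formula $\phi$ shows $\phi\in x\Rightarrow\phi\in y$, i.e.\ $x\leq_0 y$.

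For item~2, the forward direction ($x\leq_{n+1}y$ implies the back-and-forth condition) is the one I expect to be the main obstacle, because it requires producing, for a given $y'\in\u{y}$, a witness $x'\in\u{x}$ with the \emph{same} degree-$n$ theory, not merely $x'\leq_n y'$. The natural approach: consider the $\sim_n$-class of $y'$; since $\sim_n$ has finite index, there are finitely many $\sim_n$-classes, and by Remark~\ref{rmk:clopen-up-sets-n-upsets} (together with the fact that $\sim_n$-classes are clopen and each is a finite Boolean combination of clopen $\leq_n$-up-sets) there is a formula $\chi$ of $\to$-degree $\leq n$ that "describes" $\T_n(y')$ in the sense that for any point $z$, $\chi\in z$ iff $z\sim_n y'$. (Concretely, $\chi$ can be taken as the conjunction of a finite set of degree-$\leq n$ formulae in $y'$ together with the negations of finitely many that are not; negating a degree-$\leq n$ formula raises the $\to$-degree to $\leq n+1$, which is exactly the budget available.) Now suppose no $x'\in\u{x}$ satisfies $x'\sim_n y'$; then $\neg\chi\in x$ equivalently no successor of $x$ contains $\chi$, which is a formula of $\to$-degree $\leq n+1$. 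But $\chi\in y'$ and $y'\in\u{y}$ witness that the formula $\neg\chi$ (of $\to$-degree $\leq n+1$) fails at $y$, contradicting $\T_{n+1}(x)\subseteq\T_{n+1}(y)$.

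For the converse direction of item~2, suppose the back-and-forth condition holds; I need $\T_{n+1}(x)\subseteq\T_{n+1}(y)$, so take $\phi\in x$ with $|\phi|\leq n+1$ and show $\phi\in y$. I would argue by induction on the structure of $\phi$; the only interesting case is $\phi=(\alpha\to\beta)$ with $|\alpha|,|\beta|\leq n$. Assume $\alpha\to\beta\in x$ and let $y'\in\u{y}$ with $\alpha\in y'$; by hypothesis pick $x'\in\u{x}$ with $x'\sim_n y'$, so $\alpha\in x'$, hence $\beta\in x'$ (as $\alpha\to\beta\in x\subseteq$ "persists" to $x'$), hence $\beta\in y'$ since $x'\sim_n y'$ and $|\beta|\leq n$. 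As $y'$ was arbitrary, $\alpha\to\beta\in y$. The conjunction, disjunction, variable, and constant cases are immediate from the definitions and the fact that a $\leq_{n+1}$-up-set is in particular a $\leq_0$-up-set. The cleanest way to package everything is probably to first establish the "definability of $\sim_n$-classes by a degree-$\leq n+1$ formula" as a small separate claim, since it is exactly the bridge between the syntactic statement and the combinatorial back-and-forth condition, and then both directions of item~2 fall out quickly.
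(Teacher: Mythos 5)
Your item~1 and the right-to-left direction of item~2 are correct and essentially match the paper's argument (for the converse of item~2 the paper likewise reduces to showing $\alpha\to\beta\in y$ for $\alpha,\beta$ of degree $\leq n$ with $\alpha\to\beta\in x$, exactly as you do). The problem is the left-to-right direction of item~2, where your ``small separate claim'' --- that the $\sim_n$-class of $y'$ is definable as $\{z\mid \chi\in z\}$ for some formula $\chi$ --- is false. For any formula $\chi$, the set $\w{\chi}$ is an up-set for the order of $\E{\v}$ (prime filters are persistent: $z\subseteq z'$ and $\chi\in z$ imply $\chi\in z'$), whereas a $\sim_n$-class is in general not an up-set (already for $n=0$: a point not containing $p$ can lie below one that does). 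Your concrete candidate does not repair this: $\neg\alpha\in z$ means \emph{no successor} of $z$ contains $\alpha$, which is much stronger than $\alpha\notin z$, so the conjunction you describe typically fails at $y'$ itself; and the degree bookkeeping does not close either, since if the negations already push $\chi$ to degree $n+1$, then $\neg\chi$ has degree $n+2$, outside the budget.

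The repair is to define not the class itself but the (genuinely up-closed) set of points having \emph{no} successor in the class. This is what the paper does: for $y'\in\u{y}$, the formula $\theta:=\bigwedge\T_n(y')\to\bigvee\T_n(y')^{c}$ (complement taken within the formulae of degree $\leq n$) has degree $\leq n+1$, and one checks directly that $\theta\notin z$ if, and only if, some $z'\geq z$ satisfies $z'\sim_n y'$. Then the argument runs contrapositively in the direction opposite to yours: since $y'\geq y$ and $y'\sim_n y'$, we get $\theta\notin y$; as $x\leq_{n+1}y$ and $|\theta|\leq n+1$, also $\theta\notin x$; hence some $x'\geq x$ has $x'\sim_n y'$. (The paper packages finitely many representatives $y_1,\dots,y_k$ of the classes above $y$ into one disjunction, but the one-class-at-a-time version just given is the same idea.) Your instinct to use negation was close --- $\theta$ is a relativized negation --- but the consequent must be $\bigvee\T_n(y')^{c}$ rather than $\bot$, which is precisely what makes ``some successor lies in the class $[y']_n$,'' as opposed to merely ``some successor contains $\T_n(y')$,'' expressible within degree $n+1$.
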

\begin{proof}
Item 1 follows at once from the fact that every formula $\phi(\v)$ of $\to$-degree $0$ is equivalent to a finite disjunction of finite conjunctions of variables, along with the fact that $x,y$ are prime filters.

In order to prove the left-to-right implication in item 2, assume $x \leq_{n+1} y$. Since $\sim_n$ has finite index, choose a finite set $\{y_1,\ldots,y_k\}\subseteq \u{y}$ such that each $y'\in\u{y}$ is $\sim_n$-equivalent to some $y_i$. 
It suffices to prove that for each $i\in\{1,\ldots,k\}$ there is $x_i\in\u{x}$ with $x_i\sim_n y_i$. To this aim, consider the following formula, $\phi$, of $\to$-degree $\leq n+1$, defined by
\[
\phi:= \bigvee_{i=1}^k \left(\bigwedge{\T_n(y_i)}\to\bigvee{\T_n(y_i)^{c}}\right)
\]
where the complement is relative to the set of formulae of $\to$-degree at most $n$. It follows from the definitions of the logical connectives and of $\sim_n$ that, for every $z\in\E{\v}$, 
\[
\phi\notin z \iff \forall i\in\{1,\ldots,k\} \ \exists z_i\geq z \ \text{with} \ z_i\sim_n y_i.
\]
In particular, $\phi \not\in y$. Since $x \leq_{n+1} y$, also $\phi \not\in x$.
Therefore, for each $i\in\{1,\ldots,k\}$ there is $x_i\in\u{x}$ satisfying $x_i\sim_n y_i$, as was to be shown.

For the right-to-left implication, it is enough to show that $\phi\to\psi\in y$ whenever $\phi(\v),\psi(\v)$ are formulae of $\to$-degree $\leq n$ such that $\phi\to\psi\in x$. This follows easily from the definitions and the assumption.
\end{proof}

\section{Reduction to finite Kripke models}\label{s:construction-finite-kripke-models}
Fix a finite set of variables $\v$. The Esakia space $\E{\v}$ has a countable basis, and thus admits a compatible metric by Urysohn metrization theorem, and even an ultrametric (see e.g.\ \cite[7.3.F]{Engelking}). We explicitly define such an ultrametric.
Set
\begin{equation*}\label{eq:metric}
d\colon \E{\v}\times\E{\v}\to [0,1], \ \ (x,y)\mapsto 2^{-\min\{|\phi| \, \mid \, \phi \, \in \, x\bigtriangleup y\}}
\end{equation*}
where $x\bigtriangleup y$ denotes the symmetric difference of $x$ and $y$. We adopt the conventions $\min{\emptyset}=\infty$ and $2^{-\infty}=0$.
It is immediate to check that $d$ is an ultrametric on the set $\E{\v}$, i.e.\ for all $x,y,z\in \E{\v}$ the following hold: \emph{(i)} $d(x,y)=0$ if, and only if, $x=y$; \emph{(ii)} $d(x,y)=d(y,x)$; \emph{(iii)} $d(x,z)\leq \max{(d(x,y),d(y,z))}$. 

Note that, for every $x,y \in \E{\v}$ and $n \in \mathbb{N}$, $x \sim_n y$ if, and only if, $d(x,y) < 2^{-n}$. Therefore, the open ball $B(x,2^{-n})$ of radius $2^{-n}$ centered in $x$ coincides with the equivalence class $[x]_n:=\{y\in\E{\v}\mid y\sim_n x\}$, which is clopen by Remark~\ref{rmk:clopen-up-sets-n-upsets}.
\begin{lemma}\label{l:compatible-metric}
The topology of the Esakia space $\E{\v}$ is generated by the clopen balls of the ultrametric $d$.
\end{lemma}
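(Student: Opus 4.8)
The plan is to prove the two inclusions separately. Write $\tau$ for the Esakia topology on $\E{\v}$ and $\tau_d$ for the topology induced by $d$. The two facts I will lean on are both already available just above the statement: first, the open ball $B(x,2^{-n})$ equals the $\sim_n$-class $[x]_n$; and second, by Remark~\ref{rmk:clopen-up-sets-n-upsets}, each $[x]_n$ is clopen in $\E{\v}$ and, more generally, $\w{\phi}$ is an up-set for $\leq_{|\phi|}$. I will also use the elementary observation that since $d$ is valued in $\{0\}\cup\{2^{-n}\mid n\in\N\}$, every $d$-ball of positive radius either equals some $[x]_n$ or is the whole space; in particular the sets $[x]_n$ form a basis for $\tau_d$.

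For $\tau_d\subseteq\tau$: by Remark~\ref{rmk:clopen-up-sets-n-upsets} each $[x]_n$ is clopen in $\E{\v}$, hence $\tau$-open; since these sets generate $\tau_d$, we get $\tau_d\subseteq\tau$ immediately. (This inclusion also confirms that the $d$-balls are genuinely \emph{clopen} balls, as asserted in the statement.)

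For $\tau\subseteq\tau_d$: recall the Esakia topology on $\E{\v}$ is generated by the subbasis consisting of the sets $\w{a}$ for $a\in\F{\v}$ together with their complements; writing $a=[\phi]$ these are exactly the sets $\w{\phi}$ and $\w{\phi}^{\,c}$. By Remark~\ref{rmk:clopen-up-sets-n-upsets}, $\w{\phi}$ is an up-set for $\leq_{|\phi|}$, and since $\leq_{|\phi|}$-up-sets are saturated for $\sim_{|\phi|}$, both $\w{\phi}$ and its complement are unions of $\sim_{|\phi|}$-classes, i.e.\ unions of balls $B(x,2^{-|\phi|})$, hence $\tau_d$-open. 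As every subbasic $\tau$-open set is therefore $\tau_d$-open, we conclude $\tau\subseteq\tau_d$, and combining the two inclusions gives $\tau=\tau_d$.

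I do not expect a genuine obstacle here: once Remark~\ref{rmk:clopen-up-sets-n-upsets} and the identity $B(x,2^{-n})=[x]_n$ are in place, the argument is bookkeeping about bases and subbases. The only point needing a moment's care is the reduction from $d$-balls of arbitrary radius to the distinguished balls $B(x,2^{-n})$ — equivalently, checking that $\{[x]_n\}_{x,n}$ is a basis for $\tau_d$ — which is exactly where the fact that $d$ takes values only in $\{0\}\cup\{2^{-n}\mid n\in\N\}$ is used.
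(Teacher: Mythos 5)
Your proof is correct and follows essentially the same route as the paper: the containment $\tau_d\subseteq\tau$ is exactly the observation, made just before the lemma, that $B(x,2^{-n})=[x]_n$ is clopen by Remark~\ref{rmk:clopen-up-sets-n-upsets}, and your argument for $\tau\subseteq\tau_d$ rests on the same identity $\w{\phi}=\bigcup_{x\in\w{\phi}}B(x,2^{-|\phi|})$ that constitutes the paper's entire proof. You simply spell out both inclusions (and the reduction of arbitrary-radius balls to the $[x]_n$) more explicitly than the paper does.
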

\begin{proof}
Observe that, for any formula $\phi(\v)$, $\w{\phi}=\bigcup_{x \in \w{\phi}}[x]_{|\phi|}=\bigcup_{x\in\w{\phi}}{B(x,2^{-|\phi|})}$. Since the latter union is over finitely many clopen sets, it follows that $\w{\phi}$ is clopen in the topology induced by the ultrametric $d$.
\end{proof}
In order to prove that the map $\f\colon \E{\vp}\onto\E{\v}$ is open, it is useful to see the spaces at hand as approximated by finite posets, in the following sense. For each $k\in \N$ consider the finite set of balls
\begin{align*}
X_k:=\{B(x,2^{-k})\mid x\in\E{\vp}\}=\{[x]_k \mid x \in \E{\vp}\},
\end{align*}
partially ordered by $\leq_k$, and write $q_k\colon \E{\vp}\onto X_k$ for the natural quotient $x\mapsto [x]_k$. For every $k'\geq k$, there is a monotone surjection $\rho_{k',k}\colon X_{k'}\onto X_k$ sending $[x]_{k'}$ to $[x]_k$. Since $\f$ is non-extensive, it can be `approximated' by the monotone map $\f_k\colon X_k\to Y_k$, $[x]_k\mapsto [\f(x)]_k$, where $Y_k:=\{B(y,2^{-k})\mid y\in\E{\v}\}$.
\[\begin{tikzcd}[row sep=scriptsize, column sep=large]
\E{\vp} \arrow{rrrr}{\f} \arrow[twoheadrightarrow]{d}[swap]{q_{k'}} & & & &\E{\v} \arrow[twoheadrightarrow]{d}{} \\
X_{k'} \arrow[dashed]{rrrr} \arrow[twoheadrightarrow]{d}[swap]{\rho_{k',k}} & & & & Y_{k'} \arrow[twoheadrightarrow]{d}{} \\
X_{k} \arrow{rrrr}{\f_k} & & & & Y_{k}
\end{tikzcd}\]

To prove the open mapping theorem for the dual spaces of free finitely generated Heyting algebras (i.e., Proposition~\ref{prop:open-mapping}), it is enough to show that for every clopen ball $B=B(x,2^{-n})$ in $\E{\vp}$, $f(x)$ lies in the interior of $f(B)$. This is equivalent to finding, for every $n$, a number $R(n)$ such that $B(f(x),2^{-R(n)}) \subseteq f(B(x,2^{-n}))$ for all $x \in \E{\vp}$. Since $f(B(x,2^{-n}))$ is closed, it suffices to construct, for any $y$ with $y \sim_{R(n)} f(x)$, a sequence $(x^m)$ in $B(x,2^{-n})$ such that $f(x^m)$ converges to $y$. For the construction of such a sequence we will use Lemma~\ref{l:combinatorial-lemma}, which is a variant of the lemmas in \cite[Section 4]{GZ1995} and in \cite[Section 5]{Vis1996}.

Before stating Lemma~\ref{l:combinatorial-lemma} and showing how it completes the above argument, we introduce some notation. Recall that a \emph{Kripke model} on the finite set of variables $\v$ (a \emph{$\v$-model}, for short) is a partially ordered set $(M,\leq)$ equipped with a monotone map $c_M\colon M\to 2^{\v}$. If $M$ is a finite $\v$-model, then by the second part of Proposition~\ref{p:properties-of-co-free-spaces} there is a unique p-morphism $\h_M\colon M\to\E{\v}$ such that $\h_M^{-1}(\w{p_i})=c_M^{-1}(\u{p_i})$ for every $p_i\in\v$. In Lemma~\ref{l:combinatorial-lemma} we will construct a $(\vp)$-model $M$ which is a sub-poset of $X_n \times Y_m$, where $m \geq n$. Given any sub-poset $M$ of $X_n \times Y_m$, we have a diagram
\begin{equation}\label{eq:Mdiagram}
\begin{tikzcd}[row sep=small]
{} & M \arrow[bend right=20]{dl}[swap]{\pi_1} \arrow[bend left=20]{dr}{\pi_2} \arrow{dd}{\xi} & \\
X_n & & Y_m \\
{} & X_{m} \arrow[bend left=20]{ul}{\rho_{m,n}} \arrow[bend right=20]{ur}[swap]{\f_{m}}  & 
\end{tikzcd}
\end{equation}
where $\xi\colon M\to X_{m}$ is defined as $\xi:=q_m\circ \h_M$ and $\pi_1\colon M\to X_n$, $\pi_2\colon M\to Y_m$ are the natural projections.
\begin{lemma}\label{l:combinatorial-lemma}
Let $n\in \N$. There exists an integer $R(n)\geq n$ such that, for every $m\geq R(n)$, there is a finite $(\vp)$-model $M$ which is a sub-poset of $X_n\times Y_m$ and satisfies the following properties:
\begin{enumerate}
\item $\{([x]_n,[y]_m)\mid y\sim_{R(n)}\f(x)\}\subseteq M$;
\item $\rho_{m,n}\circ \xi=\pi_1$;
\item $\f_{m}\circ \xi=\pi_2$.
\end{enumerate}
In particular, items (2) and (3) together correspond to the commutativity of diagram \eqref{eq:Mdiagram}.
\end{lemma}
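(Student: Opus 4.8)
The plan is to build the model $M$ by induction on $n$, mimicking the step-by-step construction of clopen up-sets encoded by Proposition~\ref{prop:n-plus-one-equiv}. The key datum to carry through the induction is a function $R(n)$; I would try to show that a recursion of the form $R(0)=0$ (or some small base value) and $R(n+1)=R(n)+1$ (possibly with a correction term accounting for how many $\sim_n$-classes there are) works, so that $R$ grows at worst linearly. Fix $m\geq R(n)$. Define $M$ to be the set of pairs $([x]_n,[y]_m)\in X_n\times Y_m$ for which there exists some $x'\in\E{\vp}$ with $x'\sim_n x$ and $\f(x')\sim_m y$ — equivalently, the pairs that are `realized' by an actual point of $\E{\vp}$ when we only remember the degree-$n$ behaviour in the first coordinate and the degree-$m$ behaviour of the image in the second. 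With this definition, $M$ is automatically a sub-poset of $X_n\times Y_m$ (it inherits the product order), and items (2) and (3) hold essentially by construction: if $([x]_n,[y]_m)\in M$ is witnessed by $x'$, one sets $\h_M$ so that $\xi([x]_n,[y]_m)=[x']_m$, and then $\rho_{m,n}([x']_m)=[x']_n=[x]_n$ while $\f_m([x']_m)=[\f(x')]_m=[y]_m$. Item (1) is the requirement that every pair $([x]_n,[y]_m)$ with $y\sim_{R(n)}\f(x)$ be realized; since $x$ itself witnesses $([x]_n,[\f(x)]_m)\in M$, the content of (1) is precisely that one can perturb: given $y\sim_{R(n)}\f(x)$, find $x'\sim_n x$ with $\f(x')\sim_m y$.

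The heart of the argument is therefore this perturbation/back-and-forth claim, and I would prove it by induction on $n$ using Proposition~\ref{prop:n-plus-one-equiv}. For the base case $n=0$: if $y\sim_{R(0)}\f(x)$ with $R(0)$ large enough, then $y$ and $\f(x)$ agree on all variables up to sufficiently high degree, and I must produce $x'\sim_0 x$ (i.e.\ $x'$ agreeing with $x$ on the variables $\v$) with $\f(x')\sim_0 y$ (i.e.\ $\f(x')$ agreeing with $y$ on $\v$); here one exploits that $\f$ is the restriction map dual to the inclusion $i\colon \F{\v}\into\F{\vp}$, so $\f$ does not change the $\v$-part, reducing the claim to a statement purely about the extra variable $v$. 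For the inductive step, suppose the claim holds at level $n$ with bound $R(n)$, and let $y\sim_{R(n+1)}\f(x)$; I want $x'\sim_{n+1}x$ with $\f(x')\sim_{n+1}y$. Using Proposition~\ref{prop:n-plus-one-equiv}(2), $x'\sim_{n+1}x$ amounts to a mutual domination condition on $\u{x'}$ and $\u{x}$ modulo $\sim_n$, and similarly for $\f(x')\sim_{n+1}y$; the strategy is to choose $x'$ so that its upper set is obtained from that of $x$ by replacing each point $x_0\in\u{x}$ with a nearby point whose image under $\f$ has been perturbed toward the corresponding point of $\u{y}$, invoking the level-$n$ inductive hypothesis on each such replacement and using that $\f$ is a p-morphism to keep track of upper sets. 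Because $\sim_n$ has finite index, only finitely many replacements are needed, which is what forces $R(n+1)$ to be only slightly larger than $R(n)$.

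The main obstacle I anticipate is exactly the bookkeeping in this inductive step: one needs the point $x'$ produced to be a genuine prime filter of $\F{\vp}$ (i.e.\ an actual point of the Esakia space), not merely a formal `type', and one needs the upper-set manipulations to be simultaneously compatible in both coordinates — the perturbation that fixes the $\sim_{n+1}$-class in $X_n$ must not destroy the control over $\f(x')$ in $Y_m$, and vice versa. I would handle this by working with finite Kripke models realizing the relevant finite data and appealing to Proposition~\ref{p:properties-of-co-free-spaces}(2) to map them into $\E{\vp}$, or equivalently by a compactness/finite-intersection argument over the clopen sets $[\,\cdot\,]_k$; the fact that $\f(B(x,2^{-n}))$ is closed (used already in the surrounding text) is what lets one pass from an approximating sequence to an actual preimage. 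A secondary point requiring care is verifying that $M$, defined as above, is closed upward and downward appropriately so that it is a sub-\emph{poset} carrying a well-defined monotone colouring $c_M$ into $2^{\vp}$ inherited from the first coordinate, so that $\h_M$ exists; this should follow from monotonicity of $x\mapsto[x]_n$ and of $\f$, but it must be checked.
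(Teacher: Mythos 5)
There is a genuine gap, and it lies in the decomposition itself. First, you write that items (2) and (3) hold ``essentially by construction'' because ``one sets $\h_M$ so that $\xi([x]_n,[y]_m)=[x']_m$''. But $\h_M$ is not yours to choose: once $M$ is fixed as a finite poset with the colouring $c_M$ inherited from the first coordinate, Proposition~\ref{p:properties-of-co-free-spaces}(2) determines $\h_M$ uniquely, and an ad hoc assignment sending each pair to a chosen realizer $x'$ need not be monotone, need not be a p-morphism, and need not be the map dual to the colouring. Establishing $\h_M([x]_n,[y]_m)\sim_n x$ and $\f(\h_M([x]_n,[y]_m))\sim_m y$ for the canonical $\h_M$ is precisely where the work lies; the paper does this by two separate inductions (on $k\le n$ and on $k\le m$), the latter requiring as a preliminary step that the projection $\pi_2$ be a p-morphism.

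Second, with your definition of $M$ (pairs realized by some $x'\sim_n x$ with $\f(x')\sim_m y$), item (1) becomes: for every $y\sim_{R(n)}\f(x)$ and every $m\ge R(n)$ there is $x'\in B(x,2^{-n})$ with $\f(x')\sim_m y$. Since $\f(B(x,2^{-n}))$ is closed, this is literally the statement $B(\f(x),2^{-R(n)})\subseteq \f(B(x,2^{-n}))$, i.e.\ Proposition~\ref{prop:open-mapping} itself. Your reduction is therefore circular in effect: all of the difficulty is pushed into item (1), which is the theorem being proved, and the proposed induction for the ``perturbation claim'' only yields control of $\f(x')$ up to level $n+1$, not up to the arbitrary level $m$ that your definition of $M$ demands. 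The paper escapes this by choosing a much weaker membership condition for $M$ --- the existence of a \emph{witness} $(x',y')$ with $x'\ge x$, $y'\le y$, $x'\sim_n x$, $\f(x)\sim_{2r(x)-1}y'$ and $\f(x')\sim_{2r(x)-2}y$, where $r(x)$ is the number of $\sim_n$-classes above $x$ --- calibrated so that (1) is immediate from $2r(x)-1\le 2\#X_n-1=R(n)$, while the strict decrease of $r$ upon passing to a point in a new $\sim_n$-class above $x$ is exactly what makes the inductions for (2) and (3) close. This rank function and the resulting bound $R(n)=2\#X_n-1$ (rather than a linear recursion $R(n+1)=R(n)+1$) are the missing ideas.
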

We prove Lemma~\ref{l:combinatorial-lemma} in the next section. We conclude by showing how Proposition~\ref{prop:open-mapping}, and hence Theorem~\ref{thm:fin-gen}, follow from it.
\begin{proof}[Proof of Proposition~\ref{prop:open-mapping}]
It suffices to prove that $B(f(x),2^{-R(n)})$ is contained in $f(B(x,2^{-n}))$ for every $x \in \E{\vp}$ and $n \in \mathbb{N}$. Let $y \sim_{R(n)} f(x)$. For every $m \geq R(n)$, $([x]_n,[y]_m) \in M$ by item 1 in Lemma~\ref{l:combinatorial-lemma}; we define $x^m := h_M([x]_n,[y]_m)$. By item 2 in Lemma~\ref{l:combinatorial-lemma}, $[x^m]_n = \rho_{m,n}(\xi([x]_n,[y]_m)) = [x]_n$, so $x^m \in B(x,2^{-n})$. By item 3 in Lemma~\ref{l:combinatorial-lemma} we have $[f(x^m)]_m = f_m(\xi([x]_n,[y]_m)) = [y]_m$, so that $f(x^m)$ converges to $y$.
\end{proof}
%

\section{Proof of Lemma~\ref{l:combinatorial-lemma}}\label{s:proof-of-combinatorial-lemma}
Fix $n\in \N$. For every $x\in\E{\vp}$, define $r(x)$ 
to be the number of $\sim_n$-equivalence classes in $\E{\vp}$ above $x$, i.e.,
\[ 
r(x):= \#\{[x']_n \mid x'\in\u{x}\}=\#q_n(\u{x}).
\]
Moreover, set $R := R(n) = 2(\#X_n) - 1$.

Fix an arbitrary integer $m\geq R$. For elements $(x,y)$ and $(x',y')$ in $\E{\vp}\times \E{\v}$, we say that $(x',y')$ \emph{is a witness for} $(x,y)$ if $x' \geq x$, $y' \leq y$, $x' \sim_n x$, $\f(x) \sim_{2r(x)-1} y'$, and $\f(x') \sim_{2r(x)-2} y$. Note that, by definition, $f(x) \sim_{2r(x)-1} y$ if, and only if, $(x,y)$ is a witness for itself.

Let $M:=\{([x]_n,[y]_m) \in X_n\times Y_m \mid \text{there exists a witness for} \ (x,y)\}$,
and equip it with the product order. Defining $c_M\colon M\to 2^{(\vp)}$ by $c_M([x]_n,[y]_m):=\{u\in(\vp)\mid x\in\w{u} \, \}$ turns $M$ into a $(\vp)$-model. We prove that it satisfies the three required properties.
\begin{enumerate}[wide, labelwidth=!, labelindent=0pt]
\item If an element $([x]_n,[y]_m) \in X_n\times Y_m$ satisfies $y\sim_R \f(x)$, then $(x,y)$ is a witness for itself because $2r(x)-1\leq 2(\#X_n)-1 = R$. Therefore $([x]_n,[y]_m) \in M$.
\item Observe that $\rho_{m,n}\circ \xi=q_n\circ \h_M$. 
Hence we must show that $h_M([x]_n,[y]_m) \sim_n x$. Assume, without loss of generality, that $(x,y)$ admits a witness. We will prove by induction on $k$ that, for any $0 \leq k \leq n$,
\begin{equation}\label{eq:ind1}\tag{$P_k$}
\forall ([x]_n,[y]_m)\in M, \ \h_M([x]_n,[y]_m) \sim_k x.
\end{equation}
For $k = 0$, \eqref{eq:ind1} is true by definition of $c_M$.
We prove \eqref{eq:ind1} holds for $k+1$ provided it holds for $k\in\{0,\ldots,n-1\}$. We will show that (a) $\h_M([x]_n,[y]_m)\leq_{k+1}x$ and (b) $x\leq_{k+1} \h_M([x]_n,[y]_m)$.
\begin{description}
\item[(a)] Consider an arbitrary $w\geq x$. In view of Proposition~\ref{prop:n-plus-one-equiv} it is enough to find $z\geq \h_M([x]_n,[y]_m)$ such that $z\sim_k w$. Let $(x',y')$ be a witness for $(x,y)$. Then $x'\sim_n x$, so that there is $x''\geq x'$ with $x''\sim_{n-1} w$, whence $x''\sim_{k} w$. Now, two cases:
\begin{description}
\item[(i)] If $x''\sim_n x$, in view of the inductive hypothesis $\h_M([x]_n,[y]_m)\sim_k x$ we have $\h_M([x]_n,[y]_m)\sim_k x'' \sim_k w$.
Thus we can set $z:=\h_M([x]_n,[y]_m)$.
\item[(ii)] Else, suppose $x''\not\sim_n x$. Since $\f(x')\sim_{2r(x)-2}y$ and $\f(x'')\geq \f(x')$, there exists $z'\geq y$ with $z' \sim_{2r(x)-3}\f(x'')$. Now, $x''\not\sim_n x$ entails $r(x'')<r(x)$, hence $2r(x'')-1\leq 2r(x)-3$, showing that $(x'',z')$ is a witness for itself. Setting $z:=\h_M([x'']_n,[z']_m)$ we see that $z\geq \h_M([x]_n,[y]_m)$ because $\h_M$ is monotone, and $z\sim_k x''\sim_k w$ by the inductive hypothesis applied to $z$.
\end{description}
\item[(b)] Given an arbitrary $z\geq \h_M([x]_n,[y]_m)$ we must exhibit $w\geq x$ such that $w\sim_k z$. Since $\h_M$ is a p-morphism, there is $([x']_n,[y']_m)\geq ([x]_n,[y]_m)$ such that $\h_M([x']_n,[y']_m)=z$. By the inductive hypothesis, $\h_M([x']_n,[y']_m)\sim_k x'$. Now, $x\leq_n x'$ implies the existence of $w\geq x$ satisfying $w\sim_{n-1}x'$, therefore $w\sim_k x' \sim_k z$. 
\end{description}
\item We first prove the following claim.

{\bf Claim.} $\pi_2\colon M\to Y_m$ is a p-morphism.
\begin{proof}[Proof of Claim]
Pick $([x]_n,[y]_m)\in M$ and $z\in\E{\v}$ with $y\leq_m z$. We need to prove that there is $w\in\E{\vp}$ such that $([w]_n,[z]_m)\in M$. Suppose, without loss of generality, that $(x,y)$ admits a witness $(x',y')$. Then $\f(x)\sim_{2r(x)-1} y'\leq y\leq_m z$ entails $\f(x)\leq_{2r(x)-1} z$ because $m\geq 2r(x)-1$.
Since $f$ is a p-morphism, there exists $x''\geq x$ such that $\f(x'')\sim_{2r(x)-2}z$. We distinguish two cases, as above:
\begin{description}
\item[(i)] If $x''\sim_n x$, set $w:=x$. Then $(x'',y')$ is a witness for $(w,z)$.
\item[(ii)] If $x''\not\sim_n x$, set $w:=x''$. It is easy to see, reasoning as in case (ii) of the proof of item $(2)$, that $(w,z)$ is a witness for itself.\qedhere
\end{description}
\end{proof}
We use the claim to prove the identity $\f_{m}\circ \xi=\pi_2$. We show by induction that, for any $0 \leq k \leq m$,
\begin{equation}\label{eq:ind2}\tag{$Q_k$}
\forall ([x]_n,[y]_m)\in M, \ \f(\h_M([x]_n,[y]_m))\sim_k y.
\end{equation}
For $k = 0$, \eqref{eq:ind2} is true because $y \sim_0 f(x)$. We prove \eqref{eq:ind2} holds for $k+1$ if it holds for $k\in\{0,\ldots,m-1\}$.
As in item $2$, we prove that (a) $\f(\h_M([x]_n,[y]_m))\leq_{k+1}y$ and (b) $y\leq_{k+1} \f(\h_M([x]_n,[y]_m))$.
\begin{description}
\item[(a)] Pick $w\geq y$. By Proposition~\ref{prop:n-plus-one-equiv} it suffices to find $z\geq \f(\h_M([x]_n,[y]_m))$ with $z\sim_k w$. Since by the Claim $\pi_2$ is a p-morphism and $w\geq_m y$, there is $([x']_n,[y']_m)\in M$ such that $([x']_n,[y']_m) \geq([x]_n,[y]_m)$ and $y'\sim_m w$. Define $z:=\f(\h_M([x']_n,[y']_m))$. Then $z\geq \f(\h_M([x]_n,[y]_m))$ because $\f$ and $\h_M$ are monotone maps, and the inductive hypothesis applied to $z$ yields $z\sim_k y' \sim_k w$.
\item[(b)] The argument is the same, mutatis mutandis, as in the previous item, and it hinges on the fact that both $\h_M$ and $\f$ are p-morphisms.\qed
\end{description}
\end{enumerate}
\section*{Concluding remarks}
In this paper we have adopted a topological approach to the study of uniform interpolation for the intuitionistic propositional calculus. In particular, we have exposed the relation between uniform interpolation and open mapping theorems in topology. These kinds of connections between logical properties and topological ones are at the heart of duality theory. A well-known example is Rasiowa and Sikorski's proof \cite{RasSik1950} of G\"odel's completeness theorem for first-order classical logic, which exploited Baire Category Theorem.

It would be interesting to investigate further how Theorem~\ref{thm:fin-gen} compares to classical open mapping theorems in functional analysis (e.g.\ for Banach spaces) and in the theory of topological groups, which typically rely on an application of Baire Category Theorem. Also, it would be important to understand if similar open mapping theorems hold for other propositional logics, and what are the underlying reasons --- from a duality-theoretic perspective --- for such theorems to hold.

\providecommand{\bysame}{\leavevmode\hbox to3em{\hrulefill}\thinspace}
\providecommand{\MR}{\relax\ifhmode\unskip\space\fi MR }
\providecommand{\MRhref}[2]{%
  \href{http://www.ams.org/mathscinet-getitem?mr=#1}{#2}
}
\providecommand{\href}[2]{#2}

\Acknowledgements

\Addresses

\end{document}